\newcommand{\DOI}[1]{\href{http://doi.org/#1}{\color{purple}{\tiny\tt DOI:#1}}}
\newcommand{\arxiv}[1]{\href{http://arxiv.org/abs/#1}{{\tiny\tt arXiv:#1}}}
\author{Steven V Sam}
\address{Department of Mathematics, University of California San Diego, La Jolla, CA}
\email{\href{mailto:ssam@math.ucsd.edu}{ssam@math.ucsd.edu}}
\urladdr{\url{http://math.ucsd.edu/~ssam/}}
\thanks{SS was supported by NSF grant DMS-1812462.}
\author{Andrew Snowden}
\address{Department of Mathematics, University of Michigan, Ann Arbor, MI}
\email{\href{mailto:asnowden@umich.edu}{asnowden@umich.edu}}
\urladdr{\url{http://www-personal.umich.edu/~asnowden/}}
\thanks{AS was supported by NSF grants DMS-1453893.}
\title[Projective dimension over twisted commutative algebras]{A note on projective dimension over\\ twisted commutative algebras}
\date{April 4, 2025}
\begin{document}

\maketitle

\begin{abstract}
Let $M$ be a finitely generated module over a free twisted commutative algebra $A$ that is finitely generated in degree one. We show that the projective dimension of $M(\bC^n)$ as an $A(\bC^n)$-module is eventually linear as a function of $n$. This confirms a conjecture of Le, Nagel, Nguyen, and R\"omer for a special class of modules.
\end{abstract}

\section{Introduction} \label{s:intro}

Fix a positive integer $d$ and let $A=\bC[x_{i,j} \mid 1 \le i \le d,\ 1 \le j]$ be the infinite variable polynomial ring. One can picture the variables as the entries of a $d \times \infty$ matrix. The ring $A$ is obviously not noetherian, but it is known to be \emph{equivariantly noetherian} with respect to the infinite symmetric group $\fS$ or the infinite general linear group $\GL$; this means that the ascending chain condition holds for invariant ideals. The noetherian result for $\fS$ was proved by Cohen \cite{Cohen}. The noetherian result for $\GL$ follows from this, but also admits a direct (and easier) proof \cite[\S 9.1.6]{expos}.

Let $M$ be a module for $A$ that is equivariant with respect to $\fS$ or $\GL$. We also assume that $M$ is a polynomial representation of $\GL$ and that it is finitely generated in the equivariant sense. Taking invariants under an appropriate subgroup (namely, the general linear group of the subspace spanned by the standard basis vectors $e_i$ for $i > n$), one obtains a module $M_n$ over the finite variable polynomial ring $A_n=\bC[x_{i,j} \mid 1 \le i \le d,\ 1 \le j \le n]$. Given the above noetherian results, one might hope that this sequence of modules is well-behaved.

In the case of the symmetric group (and where $M$ is a homogeneous ideal of $A$), this has been investigated by Le, Nagel, Nguyen, and R\"omer. In \cite[Theorem~7.8]{NagelRoemer}, the authors show that the Hilbert series of $M_n$ behaves in a regular manner as $n$ varies: the generating function of this sequence of rational functions is itself a rational function in two variables. As a consequence, they show that the Krull dimension (in the classical sense, i.e., does not make use of the $\GL_n$-action) of $A_n/M_n$ is eventually linear \cite[Theorem~7.10]{NagelRoemer}. To translate to their notation, we take the filtered ideal $M_1 \subseteq M_2 \subseteq \cdots$. In \cite[Conjecture~1.1]{LNNR1}, the authors conjecture that the Castelnuovo--Mumford regularity of $M_n$ is eventually linear, and in \cite[Conjecture~1.3]{LNNR2} they conjecture the same for projective dimension.

In this paper, we consider the case of the general linear group. Since $\fS$ is a rather small subgroup of $\GL$, it follows that $\GL$-equivariant modules are much more constrained than $\fS$-equivariant modules. Unsurprisingly, many of the above results were previously known in the $\GL$-case: for instance, very precise results are known on the Hilbert series, and it is known that regularity is eventually constant; see \cite{NSS, symc1, symu1, regtca, hilbert}. The main result of this paper (Theorem~\ref{thm:depth}) shows that the projective dimension of $M$ is eventually linear. This confirms the conjecture of \cite{LNNR2} in the $\GL$ case. The key tools are the structure theory for modules developed in \cite{symu1}.

\section{Set-up}

We work over the complex numbers. We assume general familiarity with Young diagrams, polynomial representations, polynomial functors, and Schur functors (denoted by $\bS_\lambda$ where $\lambda$ is an integer partition), and refer to \cite{expos} for the relevant background information and detailed references. We recall that a polynomial functor is a functor $F$ from the category of vector spaces to itself such that the induced functions
\[
  \hom(V,W) \to \hom(F(V), F(W))
\]
can be described by polynomial functions for all vector spaces $V$ and $W$.

Let $\bV=\bigcup_{n \ge 1} \bC^n$ and let $\GL=\bigcup_{n \ge 1} \GL_n$. Let $\Rep^{\pol}(\GL)$ be the category of polynomial representations of $\GL$. This is equivalent to the category of polynomial functors, and we freely pass between the two points of view. The simple objects of $\Rep^{\pol}(\GL)$ are given by $\bS_\lambda(\bV)$ as $\lambda$ ranges over all partitions.

A {\bf twisted commutative algebra (tca)} is a commutative algebra object in $\Rep^{\pol}(\GL)$. Fix a $d$-dimensional vector space $E$, and put
\begin{displaymath}
A=\Sym(\bV \otimes E).
\end{displaymath}
This is a tca. It is the same ring introduced in \S \ref{s:intro}, but written in a coordinate-free manner.

By an {\bf $A$-module} we always mean a module object for $A$ in $\Rep^{\pol}(\GL)$. Explicitly, this is a module in the ordinary sense equipped with a compatible action of $\GL$ under which it forms a polynomial representation. We say that $M$ is finitely generated if there is a finite set $S$ such that the smallest $\GL$-invariant $A$-submodule of $M$ which contains $S$ is $M$ itself. Suppose that $M$ is an $A$-module. Treating $M$ and $A$ as polynomial functors, $M(\bC^n)$ is an $A(\bC^n)$-module; note that $A(\bC^n)=\Sym(\bC^n \otimes E)$ is a finite variable polynomial ring. These are the objects $M_n$ and $A_n$ from \S \ref{s:intro}.

We say that a function $f \colon \bN \to \bN$ is {\bf eventually linear} (here $\bN$ denotes the set of non-negative integers) if there exists $a \in \bN$ and $b \in \bZ$ such that $f(n)=an+b$ for all $n \gg 0$; we then call $a$ the {\bf slope} of $f$.

\section{The key technical result}

For a polynomial representation $M$ of $\GL$, we let $\gamma_M(n)$ or $\gamma(M; n)$ be the maximum size of a partition $\lambda$ with at most $n$ columns (i.e., $\lambda_1 \le n$) such that $\bS_\lambda(\bV)$ appears with nonzero multiplicity in the irreducible decomposition of $M$. The following is the key technical result we need to prove our main theorem:

\begin{theorem} \label{thm:gamma}
If $M$ is a finitely generated $A$-module then $\gamma_M$ is eventually linear with slope at most $d$.
\end{theorem}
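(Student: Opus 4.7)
The plan is to establish two things—an upper bound $\gamma_M(n) \le dn + C$ for some constant $C$, and eventual linearity of $\gamma_M$—which together immediately yield the bound on slope. For the upper bound, I would choose a finite-dimensional polynomial subrepresentation $V = \bigoplus_i \bS_{\nu_i}(\bV) \subseteq M$ that generates $M$ as an $A$-module, so that $M$ is a quotient of $A \otimes V$. Since $A = \bigoplus_{\ell(\lambda) \le d} \bS_\lambda(\bV) \otimes \bS_\lambda(E)$ involves only partitions of length at most $d$, any $\bS_\mu(\bV)$ appearing in $A \otimes V$ arises via Littlewood--Richardson from a product $\bS_\lambda \cdot \bS_{\nu_i}$ with $\ell(\lambda) \le d$. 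This forces $\lambda \subseteq \mu$ and $|\mu| = |\lambda| + |\nu_i|$. Combined with $|\lambda| \le d\lambda_1 \le d\mu_1$, we obtain $|\mu| \le dn + C$ for $\mu_1 \le n$, where $C = \max_i |\nu_i|$.

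For eventual linearity---the crux of the argument---I would invoke the structure theory of finitely generated $A$-modules from \cite{symu1}. That theory supplies a finite filtration of $M$ whose successive quotients are of two kinds: pieces induced from modules over a smaller tca $\Sym(\bV \otimes E')$ with $E' \subsetneq E$, and pieces that are ``generically free,'' resembling $A \otimes W$ for some finite-dimensional polynomial representation $W$. For the induced pieces, induction on $d = \dim E$ gives that $\gamma$ is eventually linear with slope at most $\dim E' < d$. For generically free pieces $A \otimes W$ with $W = \bigoplus_j \bS_{\rho_j}$, an explicit Littlewood--Richardson computation---take $\lambda = (n^d)$ and stack $\rho_j$ beneath it to form $\mu = (n^d, \rho_j)$, which gives $c^\mu_{(n^d),\rho_j}=1$---shows $\gamma(n) = dn + \max_j |\rho_j|$ once $n \ge \max_j (\rho_j)_1$, matching the upper bound.

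Since $\gamma_M$ is the maximum over the finitely many filtration quotients, and a maximum of finitely many eventually linear functions is again eventually linear with slope equal to the largest slope, one obtains that $\gamma_M$ is eventually linear with slope at most $d$. The main obstacle is the eventual linearity step: the naive upper bound by itself leaves open the possibility that $\gamma_M(n)$ oscillates beneath the linear envelope, and ruling this out is precisely what the structural information from \cite{symu1} is designed to control. By contrast, the upper bound is essentially immediate once one combines the Cauchy decomposition of $A$ with Littlewood--Richardson and the bound $\ell(\lambda) \le d$.
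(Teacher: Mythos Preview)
Your upper bound $\gamma_M(n)\le dn+C$ via the Cauchy decomposition and Littlewood--Richardson is correct. The gap is in the eventual-linearity step: the filtration you invoke does not exist in the form stated, and \cite{symu1} does not supply it. First, a module supported on a lower determinantal locus $V(\fa_r)$ with $r<d$ is \emph{not} in general finitely generated over $\Sym(\bV\otimes E')$ for any fixed subspace $E'\subsetneq E$; already $A/\fa_1$ fails to be finitely generated over $\Sym(\bV\otimes E')$ when $\dim E'=1$ and $d\ge 2$, so your induction on $\dim E$ cannot be run. Second, ``generically free'' does not imply free: even for $d=1$ the maximal ideal $\fm\subset A$ is torsion-free but has no filtration with subquotients of the form $A\otimes W$, so the explicit computation you carry out for $A\otimes W$ does not cover the subquotients that actually arise from any rank filtration.

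What \cite{symu1} does provide, and what the paper uses, is a semi-orthogonal decomposition at the \emph{derived} level, yielding $[M]=c_0+\cdots+c_r$ in $\rK(A)$ with $c_i\in\rK(A)_i\cong\Lambda\otimes\rK(\Gr_i(E))$. The paper then computes the formal character of each $c_i$ via Hilbert polynomials on the Grassmannians (Lemma~\ref{lem:gamma-1}) to obtain the upper bound $\gamma_M(n)\le rn+b$, and shows that a specific partition $\lambda[n^r]$ of size $rn+b$ appears in $\Theta_M$ with positive coefficient for $n\gg 0$. This last step is delicate because the individual $\Theta_{c_i}$ can carry negative coefficients that might cancel; the paper rules this out by checking that $\lambda[n^r]$ occurs in exactly one summand. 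Your max-over-subquotients strategy would sidestep this cancellation issue entirely if the filtration existed, but without it you have no mechanism for the matching lower bound.
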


\begin{example} \label{ex:gamma}
Let $M=A/\fa_r$ be the coordinate ring of the rank $\le r$ matrices in $E \otimes \bV$. Suppose that $\min(n,d) \ge r$. The Cauchy identity gives the decomposition
\begin{displaymath}
M(\bC^n) = \bigoplus_{\ell(\lambda) \le r} \bS_\lambda(E) \otimes \bS_\lambda(\bC^n)
\end{displaymath}
where the sum is over all partitions with at most $r$ many parts. Hence $\gamma_M(n) = rn$.
\end{example}

It is possible to give an elementary proof of Theorem~\ref{thm:gamma} (see Remark~\ref{rmk:elem}), but we will give a more conceptual proof based on the structure theory of $A$-modules from \cite{symu1}. We define the {\bf formal character} of a polynomial representation $M$ of $\GL$, denoted $\Theta_M$, to be the formal series $\sum_{\lambda} m_{\lambda} s_{\lambda}$, where the sum is over partitions, $m_{\lambda}$ is the multiplicity of $\bS_{\lambda}(\bV)$ in $M$, and $s_{\lambda}$ is a formal symbol. Note that we can read off $\gamma_M$ from $\Theta_M$.

Let $\fa_r \subset A$ be the determinantal ideal, as in Example~\ref{ex:gamma}. Let $\Mod_{A, \le r}$ be the category of modules (set-theoretically) supported on $V(\fa_r)$ (for an ideal $I$, we use $V(I)$ to denote its vanishing locus). In other words, $\Mod_{A, \le r}$ consists of modules $M$ such that for every element $x \in M$, there exists $n(x)$ such that $\fa_r^{n(x)} x = 0$. In particular, $\Mod_{A, \le r}$ is closed under extensions and taking submodules and quotient modules, so is a Serre subcategory of $\Mod_A$, and can define
\[
  \Mod_{A,>r}=\Mod_A/\Mod_{A, \le r}
\]
to be the Serre quotient category.  Let
\[
  T_{>r} \colon \Mod_A \to \Mod_{A,>r}
\]
be the quotient functor, let $S_{>r}$ be its right adjoint, and let $\Sigma_{>r} = S_{>r} \circ T_{>r}$ be the saturation functor. Also let
\[
  \Gamma_{\le r} \colon \Mod_A \to \Mod_{A,\le r}
\]
be the functor assigning to a module its maximal submodule supported on $V(\fa_r)$. By \cite[Theorem~6.10]{symu1}, $\rR \Sigma_{>r}$ and $\rR \Gamma_{\le r}$ preserve the finitely generated bounded derived categories.

Let $\rD(A)_{\le r}$, resp.\ $\rD(A)_{>r}$, be the full subcategories of the derived category $\rD(A)$ spanned by modules $M$ with $\rR \Sigma_{>r}(M)=0$, resp.\ $\rR \Gamma_{\le r}(M)=0$. We also use $\rD(A)_{\ge r+1}$ to denote $\rD(A)_{>r}$. Set
\[
  \rD(A)_r = \rD(A)_{\le r} \cap \rD(A)_{\ge r}.
\]
Then $\rD(A)$ admits a semi-orthogonal decomposition into the $\rD(A)_0, \ldots, \rD(A)_d$. This holds for the finitely generated bounded derived categories too \cite[\S 4]{symu1}. Letting $\rK(A)$ denote the Grothendieck group of the category of finitely generated $A$-modules, we have $\rK(A)=\bigoplus_{r=0}^d \rK(A)_r$, where $\rK(A)_r$ is the Grothendieck group of $\rD^b_{\rm fg}(A)_r$ (since we are interested in projective resolutions, we index homologically and bounded means bounded below). By \cite[Theorem 6.19]{symu1}, we have a natural isomorphism $\rK(A)_r = \Lambda \otimes \rK(\Gr_r(E))$, where $\Lambda$ is the ring of symmetric functions and $\Gr_r(E)$ is the Grassmannian of $r$-dimensional quotient spaces of $E$. We note that $\Theta$ defines an additive function on $\rK(A)$.

For a partition $\lambda$, we let $\lambda[n^r]$ be the partition $(n, \ldots, n, \lambda_1, \lambda_2, \ldots)$, where the first $r$ coordinates are $n$. This is a partition provided that $n \ge \lambda_1$. Given two partitions $\mu,\nu$, we say that $\mu$ is contained in $\nu$, and write $\mu \subseteq \nu$, if $\mu_i \le \nu_i$ for all $i$.

\begin{lemma} \label{lem:gamma-1}
Let $c \in \rK(A)_r$ be the class $s_{\lambda} \otimes [\cF]$, where $\cF$ is a coherent sheaf on $\Gr_r(E)$.
\begin{enumerate}
\item Every partition appearing in $\Theta_c$ is contained in $\lambda[n^r]$ for some $n$.
\item For $n \ge \lambda_1$, the coefficient of $\lambda[n^r]$ in $\Theta_c$ is $h_{\cF}(n)$, where $h_{\cF}$ is the Hilbert polynomial of $\cF$ with respect to the Pl\"ucker embedding.
\end{enumerate}
\end{lemma}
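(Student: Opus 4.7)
The plan is to reduce to the case $\lambda = \emptyset$ using the $\Lambda$-module structure on $\rK(A)_r$, then prove two claims about $\Theta_{1 \otimes [\cF]}$ by invoking the structure theorem \cite[Theorem~6.19]{symu1}; the combinatorial half of the proof (deducing the lemma from those claims) is routine Littlewood--Richardson bookkeeping.

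Since the $\Lambda$-factor of $\rK(A)_r \cong \Lambda \otimes \rK(\Gr_r(E))$ acts on $\rK(A)_r$ by tensoring with Schur functors of $\bV$, and this operation multiplies $\Theta$ by the corresponding Schur polynomial, we have $\Theta_{s_\lambda \otimes [\cF]} = s_\lambda \cdot \Theta_{1 \otimes [\cF]}$. I claim the lemma reduces to two subclaims: (A) every partition in $\Theta_{1 \otimes [\cF]}$ has at most $r$ parts; and (B) for every $n \ge 0$, the coefficient of $(n^r)$ in $\Theta_{1 \otimes [\cF]}$ equals $h_\cF(n)$. Granting (A) and (B), write $\Theta_{1 \otimes [\cF]} = \sum_{\ell(\mu) \le r} a_\mu s_\mu$ with $a_{(n^r)} = h_\cF(n)$. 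For (a): any $s_\nu$ appearing in $s_\lambda \cdot s_\mu$ with $\ell(\mu) \le r$ admits a semistandard tableau of shape $\nu/\lambda$ with content $\mu$; column-strictness with entries in $\{1, \ldots, \ell(\mu)\}$ forces each column of the skew to have length at most $r$, i.e., $\nu'_j \le \lambda'_j + r$ for all $j$. Setting $j = \lambda_i + 1$ and using $\lambda'_{\lambda_i + 1} = \#\{k : \lambda_k > \lambda_i\} \le i - 1$ yields $\nu_{r+i} \le \lambda_i$ for every $i \ge 1$, so $\nu \subseteq \lambda[n^r]$ for $n$ sufficiently large. For (b): the coefficient of $\lambda[n^r]$ in $\Theta_{s_\lambda \otimes [\cF]}$ equals $\sum_{\ell(\mu) \le r} a_\mu \, c^{\lambda[n^r]}_{\lambda, \mu}$; counting LR tableaux of shape $\lambda[n^r]/\mu$ with content $\lambda$, the size equation $|\lambda[n^r]| - |\mu| = |\lambda|$ forces $|\mu| = nr$, and combined with $\ell(\mu) \le r$ and $\mu \subseteq \lambda[n^r]$ this forces $\mu = (n^r)$; the unique lattice filling (place $i$ in every box of row $r+i$) gives $c^{\lambda[n^r]}_{\lambda, (n^r)} = 1$, so the coefficient collapses to $a_{(n^r)} = h_\cF(n)$.

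For (A) and (B), I would invoke the derived equivalence underlying the structure theorem: a coherent sheaf $\cF$ on $\Gr_r(E)$ corresponds to an $A$-module complex $M_\cF \simeq R\pi_*(\pi^* \cF)$, where $\pi \colon \widetilde X \to V(\fa_r)$ is a Kempf-style desingularization (a vector bundle over $\Gr_r(E)$). Claim (A) follows from the Cauchy decomposition of $\pi_* \cO_{\widetilde X}$, which as a polynomial representation of $\GL(\bV)$ involves only Schur functors $\bS_\mu(\bV)$ with $\ell(\mu) \le r$. Claim (B) follows from the projection formula combined with Borel--Weil--Bott on $\Gr_r(E)$: isolating the $(n^r)$-isotypic component of $\Theta_{M_\cF}$ yields $\chi(\Gr_r(E), \cF(n)) = h_\cF(n)$. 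The main obstacle is this cohomological identification in (B), which must be carried out against the specific parametrization of $\rK(A)_r$ used in the structure theorem; by contrast, the LR reductions of the preceding paragraph are purely combinatorial and routine once (A), (B) are in hand.
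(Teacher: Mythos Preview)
Your proposal is correct and follows essentially the same route as the paper: the paper realizes $c$ as the class of $\rR\Gamma(X,\bS_\lambda(\bV)\otimes\cF\otimes\Sym(\bV\otimes\cQ))$ on $X=\Gr_r(E)$, applies the Cauchy decomposition to see only $\bS_\mu(\bV)$ with $\ell(\mu)\le r$ occur, and then uses exactly your Littlewood--Richardson bookkeeping together with $\bS_{(n^r)}(\cQ)=\cO(n)$ to identify the coefficient of $\lambda[n^r]$ with $\chi(X,\cF(n))=h_\cF(n)$. The only cosmetic difference is that the paper keeps $\lambda$ in the geometric computation rather than first reducing to $\lambda=\emptyset$ via the $\Lambda$-action, but the content is the same and your claims (A) and (B) are precisely what the paper's computation yields in the case $\lambda=\emptyset$.
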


\begin{proof}
Let $\cQ$ be the rank $r$ tautological quotient bundle on $X=\Gr_r(E)$ and let $B=\Sym(\bV \otimes \cQ)$, which can be thought of as a tca on $X$. If $M$ is a $B$-module then $\Gamma(X, M)$ is naturally an $A$-module \cite[\S 6.2]{symu1}. Under the description of $\rK(A)$ given above, $c$ is the class of the complex $\rR \Gamma(X, M)$ where $M=\bS_{\lambda}(\bV) \otimes \cF \otimes B$ (see \cite[\S 6.6]{symu1}). Using the Cauchy decomposition for $B$, we have
\begin{displaymath}
\rH^i(X, M) = \bS_{\lambda}(\bV) \otimes \bigoplus_{\ell(\mu) \le r} \big( \bS_{\mu}(\bV) \otimes \rH^i(X, \cF \otimes \bS_{\mu}(\cQ)) \big).
\end{displaymath}
Note that the cohomology group above is just a vector space; the $\GL$ action comes from the first two Schur functors. Since $\mu$ has at most $r$ rows, the Littlewood--Richardson rule shows that all partitions appearing in $\bS_{\lambda} \otimes \bS_{\mu}$ are contained in $\lambda[n^r]$ for some $n$. This proves (a). The Littlewood--Richardson rule also shows that $\lambda[n^r]$ appears with multiplicity one in $\bS_{\lambda} \otimes \bS_{(n^r)}$ for $n \ge \lambda_1$, and does not appear in any other $\bS_{\lambda} \otimes \bS_{\mu}$ with $\ell(\mu) \le r$. Note that $\bS_{(n^r)}(\cQ) = \det(\cQ)^{\otimes n}$ and $\det(\cQ)$ is the Pl\"ucker bundle. We thus see that the coefficient of $\lambda[n^r]$ in $\Theta_c$ is
\begin{displaymath}
\sum_{i \ge 0} (-1)^i \dim \rH^i(X, \cF(n)) = h_{\cF}(n),
\end{displaymath}
which proves (b).
\end{proof}

\begin{proof}[Proof of Theorem~\ref{thm:gamma}]
Let $M$ be a finitely generated $A$-module, and suppose that $M$ is supported on $V(\fa_r)$ with $r$ minimal. By \cite[Theorem 6.19]{symu1}, we then have the following:
\begin{itemize}
\item In $\rK(A)$, we have $[M]=c_0+\cdots+c_r$ with $c_i \in \rK(A)_i$. Write $c_i=\sum_{\lambda} c_{i,\lambda}$ where $c_{i,\lambda} = s_{\lambda} \otimes [\cF_{i,\lambda}]$ and $\cF_{i,\lambda}$ is a coherent complex on $\Gr_i(E)$.
\item The class $[\cF_{r,\lambda}]$ is effective, i.e., we can assume $\cF_{r,\lambda}$ is a coherent sheaf.
\item There is a partition $\lambda$ such that $[\cF_{r,\lambda}] \ne 0$.
\end{itemize}
By Lemma~\ref{lem:gamma-1}(a) a partition with $\le n$ columns appearing with non-zero coefficient in $\Theta_{c_{i,\mu}}$ has size $\le in+\vert \mu \vert$. We thus see that $\gamma_M(n) \le rn+b$ where $b$ is the maximal size of a partition $\lambda$ with $\cF_{r,\lambda} \ne 0$, at least for $n \gg 0$.

Now, let $\lambda$ be a partition of size $b$ with $\cF_{r,\lambda}$ non-zero. By Lemma~\ref{lem:gamma-1}(b), $\lambda[n^r]$ appears with positive coefficient in $\Theta_{c_{r,\lambda}}$ for $n \gg 0$. Furthermore, the lemma shows that $\lambda[n^r]$ does not appear in $\Theta_{c_{i,\mu}}$ for any $(i,\mu) \ne (r,\lambda)$ and for $n \gg 0$. We thus see that $\lambda[n^r]$ has positive coefficient in $\Theta_M$, and so $\gamma_M(n) \ge rn+b$. This completes the proof.
\end{proof}

\begin{remark}
The proof shows that the slope of $\gamma_M$ is the minimal $r$ such that $M$ is supported on $V(\fa_r)$.
\end{remark}

\begin{remark} \label{rmk:elem}
Here is how one can prove Theorem~\ref{thm:gamma} without using the theory of \cite{symu1}. For a polynomial representation $M$, let $M[n]$ be the sum of the $\lambda$-isotypic pieces of $M$ over those $\lambda$ of size at least $n$ and with at most $n$ columns, and let $M^!=\bigoplus_{n \ge 0} M[n]$. Suppose $M$ is a finitely generated $A$-module. One then shows that $M^!$ is a finitely generated $A^!$-module, and from this deduces the structure of the bi-variate Hilbert series of $M^!$ (note that $M^!$ is bi-graded since each $M[n]$ is graded). One can deduce the theorem from this, as the Hilbert series determine $\gamma_M$.
\end{remark}

\section{Depth and projective dimension} \label{s:depth}

Let $M$ be an $A$-module. (We remind the reader that part of the definition of $A$-module is that that $M$ is a polynomial representation of $\GL$.) We write $\depth_M(n)$ or $\depth(M; n)$ for the depth of $M(\bC^n)$ as an $A(\bC^n)$-module, and $\pdim_M(n)$ or $\pdim(M; n)$ for the projective dimension of $M(\bC^n)$ as an $A(\bC^n)$-module. Our main result is the following theorem:

\begin{theorem} \label{thm:depth}
If $M$ is a finitely generated $A$-module then $\pdim_M$ and $\depth_M$ are eventually linear with slope at most $d$.
\end{theorem}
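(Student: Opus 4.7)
The strategy is to use Auslander--Buchsbaum to reduce to projective dimension, then express it via equivariant Tor and apply the structural machinery of Section~3.

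Since $A(\bC^n) = \Sym(\bC^n \otimes E)$ is a polynomial ring of dimension $dn$, hence regular, $\pdim + \depth = dn$ on finitely generated modules. So eventual linearity of $\pdim_M$ with slope $s \in [0,d]$ is equivalent to eventual linearity of $\depth_M$ with slope $d - s$, and it suffices to prove one of them. I focus on $\pdim_M$, using the equivariant Tor description: if $T_i(M) := \Tor_i^A(M, \bC)$ (a polynomial representation of $\GL$, computed via the $\GL$-equivariant Koszul resolution $A \otimes \bigwedge^\bullet(\bV \otimes E) \twoheadrightarrow \bC$), then the equivariant minimal free resolution $P_i = A \otimes T_i(M) \to M$ evaluates at $\bC^n$ to a minimal $A(\bC^n)$-free resolution of $M(\bC^n)$, giving
\[
  \pdim M(\bC^n) = \max\bigl\{\,i : T_i(M)(\bC^n) \ne 0\,\bigr\} = \max\bigl\{\,i : \exists\, \lambda \text{ with } \bS_\lambda \subset T_i(M) \text{ and } \ell(\lambda) \le n\,\bigr\}.
\]
The task reduces to studying $\eta_i(M) := \min\{\ell(\lambda) : \bS_\lambda \subset T_i(M)\}$: I want to show $\eta_i$ is eventually affine in $i$ with slope at least $1/d$.

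The lower bound $\eta_i(M) \ge i/d$ is automatic: every partition in $\bigwedge^i(\bV \otimes E)$ has size $i$ and width $\le d$, hence length $\ge i/d$; the Littlewood--Richardson rule transfers this to $T_i(M)$, which is a subquotient of $M \otimes \bigwedge^i(\bV \otimes E)$. For the matching upper bound, the plan is to mimic the proof of Theorem~\ref{thm:gamma}: use the semi-orthogonal decomposition $\rD^b_{\rm fg}(A) = \langle \rD(A)_0, \ldots, \rD(A)_d \rangle$ and the identification $\rK(A)_r = \Lambda \otimes \rK(\Gr_r(E))$ to reduce to basic classes $s_\lambda \otimes [\cF]$, for which one can compute Tor via a Koszul-type resolution on the Grassmannian (applied to representatives like $\Gamma(\Gr_r(E), \bS_\lambda(\bV) \otimes \cF \otimes \Sym(\bV \otimes \cQ))$, which one expects to have $\pdim$ slope $d-r$ in $n$). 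This should show that the ``bottom'' piece of $M$ in the SOD (smallest $r$ with nonzero class $c_r$) governs the slope of $\pdim_M$, namely $d - r_{\min}$, dually to how the ``top'' piece governed $\gamma_M$ in Theorem~\ref{thm:gamma}.

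The main obstacle is that $\pdim$ is not additive under distinguished triangles: the short-exact-sequence bounds $\max(\pdim M', \pdim M'' - 1) \le \pdim M \le \max(\pdim M', \pdim M'')$ preserve only the asymptotic slope, not the exact linear function. One natural workaround is to apply Theorem~\ref{thm:gamma} to the equivariant Ext modules $\underline{\Ext}^i_A(M, A)$---finitely generated $A$-modules whose evaluations compute $\Ext^i_{A(\bC^n)}(M(\bC^n), A(\bC^n))$---and use $\pdim M(\bC^n) = \max\{i : \underline{\Ext}^i_A(M, A)(\bC^n) \ne 0\}$. Combining with the expected boundedness of the range of nonzero $\underline{\Ext}^i$, a consequence of the duality theory of \cite{symu1}, would then give the required precise eventual linearity.
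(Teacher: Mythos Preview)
Your reduction via Auslander--Buchsbaum and the identification $\pdim_M(n)=\max\{i:T_i(M)(\bC^n)\ne 0\}$ are correct, and you have correctly diagnosed the obstacle that $\pdim$ is not additive on triangles, so the semi-orthogonal decomposition alone does not give linearity. However, your proposed workaround through $\underline{\Ext}^i_A(M,A)$ does not work. The object $\Ext^i_{A(\bC^n)}(M(\bC^n),A(\bC^n))$ is not the evaluation at $\bC^n$ of any fixed polynomial representation of $\GL$: already for $M=\bC$ it is one-dimensional when $i=dn$ and zero otherwise, which is not the behaviour of a polynomial functor (whose evaluation at $\bC^n$ is nonzero precisely once $n$ exceeds a fixed threshold). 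Thus there are no finitely generated $A$-modules $\underline{\Ext}^i_A(M,A)$ in $\Rep^{\pol}(\GL)$ to which Theorem~\ref{thm:gamma} could be applied, and there is no ``bounded range of nonzero $\underline{\Ext}^i$'' in the sense you invoke.

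The missing idea is to apply the transpose $\dagger$ to the Tor groups and reorganize them by \emph{linear strand} rather than by homological degree. Set $F_k(M)_{p+k}=\Tor_p^A(M,\bC)_{p+k}^{\dagger,\vee}$. The effect of $\dagger$ is that a partition $\lambda$ with $\ell(\lambda)\le n$ appearing in $T_p(M)$ becomes a partition $\lambda^{\dagger}$ with at most $n$ \emph{columns} in $F_k(M)$ (where $k=|\lambda|-p$), so one obtains $\pdim_M(n)=\max_k\bigl(\gamma(F_k(M);n)-k\bigr)$. The nontrivial structural input, replacing your hoped-for Ext finiteness, is \cite[Theorem~7.7]{symu1}: each $F_k(M)$ is a finitely generated $A$-module, and only finitely many $k$ give a nonzero $F_k(M)$. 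Theorem~\ref{thm:gamma} then applies to each $F_k(M)$, and the maximum of finitely many eventually linear functions of slope $\le d$ is again eventually linear of slope $\le d$.
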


\begin{example}
Let $M=A/\fa_r$ be the coordinate ring of the rank $\le r$ matrices, as in Example~\ref{ex:gamma}. Suppose that $\min(n,d) \ge r$. Then $M(\bC^n)$ has codimension $(d-r)(n-r)$ and is Cohen--Macaulay, so its projective dimension is $\pdim_M(n) = (d-r)n - (d-r)r$. And by the Auslander--Buchsbaum formula, its depth is $\depth_M(n) = rn + r(d-r)$.
\end{example}

We now prove Theorem~\ref{thm:depth}. The Auslander--Buchsbaum formula states that
\[
  \depth_M(n) + \pdim_M(n) = dn,
\]
which allows us to deduce the result for $\depth$ from that for $\pdim$.

Using \cite[Theorem 7.7]{symu1}, there are finitely generated $A$-modules $F_k(M)$ that can be extracted from the linear strands of the minimal free resolution of $M$; its graded components are given by
\[
  F_k(M)_{p+k} = \Tor_p^A(M, \bC)_{p+k}^{\dagger, \vee},
\]
where $\vee$ is the duality on polynomial functors which fixes simple objects (see \cite[(6.1.6)]{expos}), and $\dagger$ is the equivalence on polynomial functors which interchanges the usual symmetric structure with the graded symmetric structure, and in particular has the effect $\bS_\lambda^\dagger = \bS_{\lambda^\dagger}$ (see \cite[(6.1.5)]{expos}). There are only finitely many values of $k$ for which $F_k(M)$ is non-zero.

The theorem is now a consequence of Theorem~\ref{thm:gamma} and the following lemma:

\begin{lemma} \label{lem:pdformula}
Let $M$ be a finitely generated $A$-module. Then
\begin{displaymath}
\pdim_M(n) = \max_k ( \gamma(F_k(M); n) - k ).
\end{displaymath}
\end{lemma}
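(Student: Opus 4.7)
The plan is to express $\pdim_M(n)$ directly in terms of the polynomial representation $\Tor^A(M,\bC)$, and then translate the resulting statement through the $(-)^{\dagger,\vee}$ duality that defines $F_k(M)$.

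First I would choose a minimal $\GL$-equivariant free resolution $P_\bullet \to M$ in the category of $A$-modules in $\Rep^{\pol}(\GL)$; so each $P_p$ has the form $A \otimes V_p$ for a polynomial representation $V_p$, and the differentials land in $A_+ \cdot P_\bullet$. Since evaluation at $\bC^n$ is exact on polynomial representations and carries $A \otimes V$ to $A(\bC^n) \otimes V(\bC^n)$, the complex $P_\bullet(\bC^n) \to M(\bC^n)$ is a minimal free resolution over $A(\bC^n)$. Consequently $\Tor_p^{A(\bC^n)}(M(\bC^n),\bC) = \Tor_p^A(M,\bC)(\bC^n)$, and $\pdim_M(n)$ is the largest $p$ for which this evaluation is nonzero. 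Using that a polynomial representation $\bigoplus_\lambda m_\lambda \bS_\lambda(\bV)$ evaluates to a nonzero space on $\bC^n$ exactly when some $\lambda$ with $m_\lambda \ne 0$ satisfies $\ell(\lambda) \le n$, this translates to the statement that $\pdim_M(n)$ is the largest $p$ such that $\Tor_p^A(M,\bC)$ contains some $\bS_\lambda(\bV)$ with $\ell(\lambda) \le n$.

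Next I would translate this condition into one about $F_k(M)$. Because $\vee$ fixes simples and $\dagger$ sends $\bS_\lambda$ to $\bS_{\lambda^\dagger}$, transposition is a multiplicity-preserving bijection between partitions appearing in $\Tor_p^A(M,\bC)_{p+k}$ and partitions appearing in $F_k(M)_{p+k}$. Under $\lambda \mapsto \mu := \lambda^\dagger$, the size $|\mu| = p+k$ is preserved, and the condition $\ell(\lambda) \le n$ becomes $\mu_1 \le n$, i.e.\ $\mu$ has at most $n$ columns. Hence
\[
\gamma(F_k(M);n) = \max\{\,p+k : \Tor_p^A(M,\bC)_{p+k}\text{ contains some }\bS_\lambda(\bV)\text{ with }\ell(\lambda) \le n\,\},
\]
so $\gamma(F_k(M);n) - k$ records the largest such $p$ coming from the $k$-th linear strand. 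Maximizing over $k$ and combining with the description of $\pdim_M(n)$ from the previous paragraph yields the claimed formula.

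The only step requiring care is the claim that a $\GL$-equivariant minimal free resolution remains a minimal free resolution upon evaluation at $\bC^n$, but this is routine: evaluation is exact on polynomial representations (so exactness persists), sends free $A$-modules to free $A(\bC^n)$-modules, and preserves the property that differentials reduce to zero modulo the augmentation ideal (so minimality persists). Everything else is bookkeeping between the $\dagger$ operation on Schur functors and the transpose of Young diagrams.
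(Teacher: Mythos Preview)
Your proof is correct and follows essentially the same route as the paper's: both identify $\pdim_M(n)$ as the largest $p$ with $\Tor^A_p(M,\bC)(\bC^n)\ne 0$ (using that the equivariant minimal free resolution specializes to a minimal free resolution over $A(\bC^n)$), and then unwind this through the $(\dagger,\vee)$ correspondence between $\Tor_p^A(M,\bC)_{p+k}$ and $F_k(M)_{p+k}$ to see that ``$\ell(\lambda)\le n$'' becomes ``at most $n$ columns,'' which is exactly what $\gamma$ measures. Your write-up is somewhat more explicit about why specialization preserves exactness and minimality, but the argument is the same.
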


\begin{proof}
Fix $n$, and let $N$ be the maximum appearing on the right side of the above equation. For this proof, write $T_i(M)$ for $\Tor^{A}_i(M, \bC)$. By definition, we have
\begin{displaymath}
T_p(M) = \bigoplus_k F_k(M)_{p+k}^{\dag,\vee}.
\end{displaymath}
We thus see that $T_q(M)(\bC^n) \ne 0$ for some $q \ge p$ if and only if there exists some $k$ such that $F_k(M)$ has a partition of size at least $p+k$ with at most $n$ columns, that is, $\gamma(F_k(M); n) \ge p+k$. Therefore, the maximum $p$ for which $T_p(M)(\bC^n) \ne 0$ is $p=N$, and the result follows since $\pdim_M(n)$ is the maximum $p$ for which
\begin{displaymath}
T_p(M)(\bC^n) = \Tor_p^{A(\bC^n)}(M(\bC^n),\bC)
\end{displaymath}
is non-zero.
\end{proof}

\section{Krull dimension}

Let $B$ be a quotient tca of $A$. Define $\delta_B(n)$ to be the Krull dimension of the ring $B(\bC^n)$. Since the defining ideal for $B$ is stable under the infinite symmetric group $\fS$, it follows from \cite[Theorem~7.10]{NagelRoemer} that $\delta_B$ is eventually linear. We now give an easy proof of a more precise result by leveraging the theory from \cite{symu1}.

We first recall some relevant information from \cite[\S 3]{symu1}. Let $C$ be any tca. An ideal $I$ of $C$ is {\bf prime} if, given any other ideals $J, J'$ of $C$, we have that $JJ' \subseteq I$ if and only if $J \subseteq I$ or $J' \subseteq I$. (Note that, by definition, all ideals are $\GL$-stable.) The {\bf spectrum} $\Spec(C)$ is defined to be the set of prime ideals of $C$, and is equipped with the Zariski topology (defined in the same way as for ordinary rings).

Next, let $\Gr_r(E)$ denote the underlying topological space of the Grassmannian (thought of as a scheme) parametrizing rank $r$ quotients of $E$. The {\bf total Grassmannian} of $E$, denoted $\Gr(E)$, is $\coprod_{r=0}^d \Gr_r(E)$ as a set. We topologize $\Gr(E)$ by defining a subset $Z \subset \Gr(E)$ to be closed if and only if
\begin{itemize}
\item $Z \cap \Gr_r(E)$ is closed for all $r$, and 
\item $Z$ is closed under taking quotients: if $E \to U$ is in $Z$, then so is $E \to U'$ for any quotient space $U'$ of $U$.
\end{itemize}
By \cite[Theorem 3.3]{symu1}, we have a homeomorphism $\Spec(A) \cong \Gr(E)$, and hence $\Spec(B)$ can be identified with a closed subset of $\Gr(E)$. If $Z \subset \Gr_r(E)$ is a Zariski closed irreducible subset, then its closure in $\Gr(E)$ is irreducible, and every irreducible closed subset of $\Gr(E)$ is of this form \cite[Proposition 3.2]{symu1}. Hence we can label irreducible closed subsets of $\Gr(E)$ by pairs $(r,Z)$ where $Z\subset \Gr_r(E)$ is a Zariski closed irreducible subset.

We then have the following result:

\begin{theorem}
Let $B$ be a quotient tca of $A$, and recall that $d=\dim(E)$.
\begin{enumerate}[\indent \rm (a)]
\item There exist integers $0 \le a \le d$ and $0 \le b \le (d-a)a$ such that $\delta_B(n)=an+b$ for all $n \gg 0$.
\end{enumerate}
Now assume that $\Spec(B)$ is irreducible.
\begin{enumerate}[\indent \rm (a)]
\setcounter{enumi}{1}
\item If $\Spec(B)$ corresponds to the pair $(r, Z)$, then $a=r$ and $b=\dim{Z}$.
\item If $b=0$ then $\Spec(B)=V(I)$ where $I$ is generated by linear forms.
\item If $b=(d-a)a$ then $\Spec(B)$ is the determinantal variety of rank $\le a$ maps.
\end{enumerate}
\end{theorem}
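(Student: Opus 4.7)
The plan is to reduce (a) to (b) by an irreducible decomposition, and to prove (b), (c), (d) by a direct geometric analysis of the prime $\fp_{r,Z}$ of $A$ corresponding to an irreducible pair $(r,Z)$ under the homeomorphism $\Spec(A) \cong \Gr(E)$ from \cite[Theorem~3.3]{symu1}.

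For (a), since Krull dimension is insensitive to nilpotents, we may replace $B$ by $A/J$ with $J$ radical. Writing $J = \bigcap_i \fp_i$ for the finitely many minimal primes, with each $\fp_i$ corresponding to a pair $(r_i, Z_i)$, and using that intersection of polynomial subfunctors commutes with evaluation at $\bC^n$, we get $V(J(\bC^n)) = \bigcup_i V(\fp_i(\bC^n))$, hence $\delta_B(n) = \max_i \delta_{A/\fp_i}(n)$. Granting (b), the right side is the maximum of finitely many eventually linear functions, so it is eventually linear with slope $a = \max_i r_i$ and intercept $b = \max\{\dim Z_i : r_i = a\}$; the bounds $0 \le a \le d$ and $0 \le b \le (d-a)a$ follow from $r_i \le d$ and $\dim Z_i \le \dim \Gr_{r_i}(E) = r_i(d-r_i)$.

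For (b), replace $B$ by $A/\fp_{r,Z}$. A point $[E \twoheadrightarrow U] \in \Gr_r(E)$ corresponds to the prime $\fp_{[U]} = \ker(A \to \Sym(\bV \otimes U))$, which is generated in degree one by $\bV \otimes \ker(E \twoheadrightarrow U)$; at level $n$ this cuts out the linear subspace $\Hom(\bC^n, U^*) \subset \Hom(\bC^n, E^*) = \Spec A(\bC^n)$ of dimension $rn$. Since $\fp_{r,Z} = \bigcap_{[U] \in Z} \fp_{[U]}$ as a subfunctor and evaluation preserves intersections, we obtain $V(\fp_{r,Z}(\bC^n)) = \bigcup_{[U] \in Z} \Hom(\bC^n, U^*)$, which is Zariski closed since $Z$ is projective. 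This union is the image of the total space of the rank-$rn$ vector bundle $\Hom(\bC^n, \cQ^*)|_Z$, where $\cQ$ is the tautological quotient bundle on $\Gr_r(E)$. For $n \ge r$, a generic point has image of rank exactly $r$ in $E^*$, and $[U]$ is uniquely recovered as the $r$-dimensional quotient of $E$ whose dual is that image; so the projection is birational onto its image and $\dim V(\fp_{r,Z}(\bC^n)) = \dim Z + rn$.

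Parts (c) and (d) are then immediate. If $b = 0$ then $\dim Z = 0$, so $Z = \{[U]\}$ and $\fp_{r,Z} = \fp_{[U]}$ is generated by linear forms by the description above. If $b = (d-a)a = \dim \Gr_a(E)$ then $Z = \Gr_a(E)$, and a direct check ($\fa_a \subseteq \fp_{[U]}$ iff $\dim U \le a$) identifies $\fp_{a, \Gr_a(E)}$ with the determinantal ideal $\fa_a$. The main technical step is the identification $V(\fp_{r,Z}(\bC^n)) = \bigcup_{[U] \in Z} \Hom(\bC^n, U^*)$ in the proof of (b); this rests on the explicit description of $\fp_{[U]}$ from \cite[Theorem~3.3]{symu1} together with the commutation of intersection of polynomial subfunctors with evaluation.
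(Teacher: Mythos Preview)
Your argument is correct and follows the same route as the paper: both reduce (a) to the irreducible case by noetherianity, and both prove (b) by exhibiting $\Spec B(\bC^n)$ as (the closure of) an $rn$-dimensional fibration over $Z\subset\Gr_r(E)$. The paper does this in one line by invoking \cite[Lemma~3.7]{symu1}, whereas you unpack that lemma concretely---writing $\fp_{r,Z}=\bigcap_{[U]\in Z}\fp_{[U]}$, specializing to $\bC^n$, and identifying the variety with the image of the bundle $\Hom(\bC^n,\cQ^*)|_Z$ via a birationality argument for $n\ge r$. One small point to tighten: from an \emph{infinite} intersection of ideals one only gets $V\bigl(\bigcap I_\alpha\bigr)=\overline{\bigcup V(I_\alpha)}$ a priori, so your equality $V(\fp_{r,Z}(\bC^n))=\bigcup_{[U]\in Z}\Hom(\bC^n,U^*)$ should be phrased as containment first, with equality following because the union is already closed (image of a proper map)---which is precisely your next clause, just reordered. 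The identity $\fp_{r,Z}=\bigcap_{[U]\in Z}\fp_{[U]}$ itself also deserves a sentence of justification; it is essentially what \cite[Lemma~3.7]{symu1} encodes.
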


\begin{proof}
By noetherianity of $A$, $\Spec(B)$ has finitely many irreducible components, so it suffices to prove (a) when $\Spec(B)$ is irreducible. We will assume that from the beginning.  Suppose $\Spec(B)$ corresponds to $(r, Z)$.  Let $Y_n \subset \Spec(A(\bC^n))$ be the space of maps of rank exactly $r$.  Then the natural map $\pi_n \colon Y_n \to \Gr_r(E)$ is a fibration of relative dimension $rn$.  Furthermore, $\Spec(B(\bC^n))$ is the inverse image of $Z$ under $\pi_n$ (see \cite[Lemma~3.7]{symu1}). This proves (a) and (b).  If $b=0$ then $Z$ is a point, while if $b=(d-a)a$ then $Z$ is all of $\Gr_r(E)$; (c) and (d) follow.
\end{proof}

\end{document}